\newcommand{\M}{\mathcal{M}}
\newcommand{\B}{\mathcal{B}}
\newcommand*{\quark}{\setbox0\hbox{$x$}\hbox to\wd0{\hss$\cdot$\hss}}
\newcommand{\todo}[1]{\bgroup\color{red}#1\egroup}
\newtheorem{theorem}{Theorem}
\newtheorem{definition}{Definition}[section]
\journal{GMP 2023}
\let\cite\citep
\begin{document}

\begin{frontmatter}

\title{
Sasaki Metric for Spline Models\\of Manifold-Valued Trajectories}


\author[first]{Esfandiar Nava-Yazdani\corref{cor}}
\cortext[cor]{Corresponding author}
\ead{navayazdani@zib.de}
\author[first,last]{Felix Ambellan}
\ead{ambellan@zib.de}
\author[first,last]{Martin Hanik}
\ead{hanik@zib.de}
\author[last]{Christoph von Tycowicz}
\ead{vontycowicz@zib.de}
\address[first]{Zuse Institute Berlin, Germany}
\address[last]{Freie Universität Berlin, Germany}

\begin{abstract}
We propose a generic spatiotemporal framework to analyze manifold-valued measurements, which allows for employing an intrinsic and computationally efficient Riemannian hierarchical model. Particularly, utilizing regression, we represent discrete trajectories in a Riemannian manifold by composite B\' ezier splines, propose a natural metric induced by the Sasaki metric to compare the trajectories, and estimate average trajectories as group-wise trends. We evaluate our framework in comparison to state-of-the-art methods within qualitative and quantitative experiments on hurricane tracks. Notably, our results demonstrate the superiority of spline-based approaches for an intensity classification of the tracks.
\end{abstract}

\begin{keyword}
 Riemannian regression \sep Sasaki metric \sep Hurricane \sep Manifold-valued trajectory \sep B\' ezier spline \sep Tangent bundle \sep Functional data analysis 
\end{keyword}

\end{frontmatter}


\section{Introduction}
A wide range of applications in many areas including  morphology, action recognition and computer vision requires longitudinal study of manifold-valued data. Approaches relying on the Euclidean structure of the embedding space or local coordinates do not correctly incorporate the inherent underlying geometry. Particularly, representing the geometry of trajectories in Euclidean spaces undermines the ability to represent natural variability in populations. The need for geometry-aware approaches has led to an increasing number of works on comparison of manifold-valued trajectories via Riemannian metrics~\cite{MuFl2012, le2015reparameterization, schiratti2015learning, bauer2017numerical,debavelaere2020learning, RePEc:biomet2021, overviewintrinsic, shao2022intrinsic, NavaYazdaniHegevonTycowicz2022, HanikHegevonTycowicz2022}. 

A Riemannian framework for statistics and analysis in the space of trajectories is appealing because it provides a rich structure for which powerful tools are already available. 
Since the space of general trajectories is infinite-dimensional, it is highly complex. Particularly for numerical treatments, it is thus desirable to replace it with an adequate finite dimensional space. A natural and powerful method towards this goal is to represent the trajectories by finitely parametrized curves via regression, which is a ubiquitous tool in various application fields. This includes piece-wise geodesic, kernel, polynomial, logistic (see~\cite{regoverview} and the references therein), and Fourier regression~\cite{Suparti_2019}. 

The most common method for regression on a manifold is geodesic regression (cf.~\cite{SM2006,niethammer2011geodesic,Fletcher2013,NavaYazdaniHegeSullivanetal.2020,NavaYazdaniHegevonTycowicz2022}), which is the counterpart of linear regression in Euclidean space. Often, the relation between the variables is highly complex, and 
therefore geodesic regression can prove to be inadequate for model selection toward a best-fitting curve. In such cases, it is necessary to resort to a more flexible model for regression. For this purpose, a generalization that relies on intrinsic polynomials in Riemannian manifolds was proposed 
in~\cite{Hinkle2014}. However, due to their flexibility and effective evaluation in terms of simple iterative algorithms for a constructive approach, B\'ezier splines~\cite{PN2007,GousenbourgerMassartAbsil2019} are more adequate as best-fitting curves for
regression. Note that, inconsistencies are minimized by considering best-fitting smooth curves.

In this work, we employ the spline regression approach that was recently proposed in~\cite{Hanik_ea2020} and~\cite{HanikHegevonTycowicz2022} to approximate trajectories. We show that the resulting space of B\'ezier splines is finite-dimensional and that it can be endowed with a natural Riemannian metric that stems from the famous Sasaki metric.   
Utilizing this structure, one can intrinsically employ principal component analysis and, particularly, compute group trends as average trajectories; the latter establishes a generic hierarchical model~(cf.~\cite{MuFl2012,NavaYazdaniHegevonTycowicz2022,HanikHegevonTycowicz2022}).

We verify the virtue of the proposed framework by applying it to data on the 2010-2021 Atlantic hurricane tracks. Also referred to as tropical cyclones, hurricanes belong to the most destructive natural disasters. They can have an enormous impact on environment, economy, and human life~\cite{weinkle2018normalized}.
We compare our approach to state-of-the-art methods regarding an intensity classification for the tracks. This comparison substantiates the advantages of the proposed approach. The code implementing our method is publicly available in \texttt{morphomatics}~\cite{Morphomatics} v3.0 and the experiments in \url{https://github.com/morphomatics/GeometricHurricaneAnalysis}.
This paper is organized as follows. In the next section, we describe the mathematical framework and proposed hierarchical model. Therein, we present the spline regression and the Sasaki metric, which will be used to compare the splines representing the trajectories. Section~\ref{sec:hur} presents the application of our approach to hurricane tracks. Therein, we present results for the intrinsic hierarchical analysis of the hurricane tracks as well as our classification experiment and a discussion of the numerical outcome.
\section{Mathematical Framework and Method}
\label{sec:method}
\subsection{Background}
We start by summarizing important facts on the geometry of the tangent bundle. For more background information see, e.g., \cite{doCarmo1992}. 
Let $(M, g)$ be a $n$-dimensional smooth\footnote{In this work, ``smooth'' stands for ``infinitely differentiable.''} Riemannian manifold and $TM$ its tangent bundle with bundle projection $\pi: TM \to M$, $(p,v) \mapsto p$. We denote the tangent space at $p\in M$ by $T_pM$ and the restriction of $g$ to $T_pM$ by $g_p$. We always assume that maps are smooth. 

Being a $2n$-dimensional manifold itself, the natural Riemannian metric of $TM$ is the Sasaki metric. 
For the definition of the latter, recall that the tangent bundle $TTM$ of $TM$ is the direct sum of a vertical subbundle $VTM$ (the kernel of the derivative $\textnormal{d}\pi$ of $\pi$) and a horizontal subbundle $HTM$ determined by the Levi-Civita connection $\nabla$ of $M$ as vectors tangent to parallel vector fields
, and both have rank $n$. Intuitively, horizontal vectors are directions in which only the footpoint changes, whereas the latter is constant in vertical directions.  
The Sasaki metric~\cite{sasaki1962differential} is the unique Riemannian metric on $TM$ with the following properties:
\begin{itemize}
    \item The bundle projection $\pi$ is a Riemannian submersion, i.e.\ $\pi$ has maximal rank and $\textnormal{d}\pi$ preserves lengths of horizontal vectors.
    \item For any $p \in M$, the restriction of $\widetilde{g}$ to the tangent space $T_pM \subset TM$ coincides with $g_p$.
    \item Let $u$ be a \textit{parallel} vector field along a curve $q: (-\varepsilon,\varepsilon) \to M$. Define $\zeta = (-\varepsilon,\varepsilon) \to TM$, $t \mapsto (q(t), u(t))$. Let further $p=q(0)$ lie on $q$ and $v: (-\varepsilon,\varepsilon) \to T_pM$ such that $u(0) = v(0)$. Define $\eta: (-\varepsilon,\varepsilon) \to TM$, $t \mapsto (p, v(t))$. Then $\dot{\zeta}(0) := \frac{d}{dt}\zeta(0) \in H_{(p,u(0))}TM$ and $\dot{\eta}(0) = \frac{d}{dt}\eta(0) \in V_{(p,u(0))}TM$ are orthogonal.
\end{itemize}
    It follows that horizontal and vertical vectors are orthogonal.
    
    Let $(p,u) \in TM$. Because $H_{(p,u)}TM$ and $V_{(p,u)}TM$ are $n$-dimensional vector spaces, both can be identified with $T_pM$; in other words, we can view an element of $T_{(p,u)}TM$ as a tuple $(v,w) \in (T_pM)^2$. As horizontal and vertical components are orthogonal, the Sasaki metric between $(v_1,w_1), (v_2,w_2) \in T_{(p,u)}TM$ then reads
    \begin{equation} \label{def:Sasaki_metric}
        \widetilde{g}_{(p,u)}\big((v_1,w_1), (v_2,w_2)\big) = g_p(v_1,v_2) + g_p(w_1,w_2).
    \end{equation}

    Geodesics of the Sasaki metric can be characterized in terms of geometric features of the underlying space.
    Let $\eta=(q,u): (-\varepsilon,\varepsilon) \to TM$ be a curve in $TM$ and $\dot{\eta} = (v,w)$. Denoting the Riemannian curvature tensor of $M$ by $R$, $\eta$ is a geodesic if (and only if) the coupled system
    \begin{align*}
        \nabla_vv &= -R(u, w)v, \\
        \nabla_vw &= 0
    \end{align*}
    holds. Intuitively, the above equations say that the footpoint curve $q$ bends according to the curvature of $M$, while the vector component $u$ changes at a constant rate.
    
    Further properties of $(TM, \widetilde{g})$, e.g.\ its curvature tensor and Levi-Civita connection, can be found in~\cite{gudmundsson2002geometry}.

\subsection{B\' ezier Splines}

Let $U$ denote a normal convex (sometimes also called totally normal) neighborhood in $M$. Thus, the diameter of $U$ is less than the injectivity radius of $M$ and the Riemannian logarithm $\log$ is defined on the whole of $U$. In particular, for any $p,q\in U$ there is a unique geodesic from $p$ to $q$ that never leaves $U$ given by 
\[
\gamma(t; p,q):=\exp_p(t\log_pq), \quad 0\leq t\leq 1\,.
\]
Requiring that the data lies in $U$ ensures well-posedness of our construction. If $M$ is a Hadamard manifold, then we can simply set $U=M$.
For points $p_0,\dots,p_k \in U$, we set the following.
\begin{definition}
[De Casteljau's Algorithm on Manifolds]\label{dc}
    \begin{align*}
	&\beta_i^0(t):=p_i, \quad i=0,\dots,k-r,\\
        &\beta_i^r(t):=\gamma(t; \beta_i^{r-1}(t), \beta_{i+1}^{r-1}(t)), \quad r=1,\dots,k, \quad 0\leq t\leq 1.
    \end{align*}
\end{definition}
We call $\beta := \beta^k_0$ \textit{Bézier curve of degree $k$ with control points} $p_0,\dots,p_k$.  
For more details on them and some applications, we refer to \cite{PN2007} and \cite{NP2013}. Notation-wise, whenever we want to make the dependency on the control points clear, we write $\beta(t; p_0,\dots,p_k)$ instead of $\beta(t)$ for the value of $\beta$ at $t$.

This work is centered around cubic Bézier curves, i.e. we consider the case $k=3$.
Then, control points and velocities at endpoints satisfy
\begin{align} \label{eq:point_from_vec}
\begin{split}
  p_1&=\exp_{p_0} \left( \frac{1}{3}\dot{\beta}(0) \right),\\
p_{2}&=\exp_{p_3} \left(-\frac{1}{3}\dot{\beta}(1) \right).  
\end{split}
\end{align}
or equivalently
\begin{align} \label{eq:vec_from_point}
\begin{split}
\dot{\beta}(0)&=3\log_{p_0}p_1,\\
\dot{\beta}(1)&=-3\log_{p_3}p_2. 
\end{split}
\end{align}

These properties can be exploited to define differentiable (or composite) B\'ezier splines~\cite{GousenbourgerMassartAbsil2019, Hanik_ea2020}. In the following, we explain this construction. For $i=0,\dots,L-1$, let $\big( p^{(i)}_0, p^{(i)}_1, p^{(i)}_2, p^{(i)}_{3} \big)$ be the control points of $L\ge2$ cubic Bézier curves $\beta^{(0)},\dots,\beta^{(L-1)}$ such that 
\begin{equation} \label{eq:C1_condition}
    p^{(i)}_{3} = p^{(i+1)}_0 \quad \text{and} \quad  \gamma \left( 2;p^{(i)}_{2},p^{(i)}_{3} \right) = p^{(i+1)}_1
\end{equation}
for all $i=0,\dots,L-2$.
The \textit{cubic Bézier spline $B$ with control points} 
$$\left( p^{(i)}_0,p^{(i)}_1,p^{(i)}_2,p^{(i)}_3 \right)_{i=0,\dots,L-1}$$ 
is then defined by
\begin{equation*} \label{eq: spline}
    B(t) := \begin{cases} \beta^{(0)}\left(t;p^{(0)}_0,p^{(0)}_1,p^{(0)}_2,p^{(0)}_3\right), &\quad t \in [0,1],\\ \beta^{(i)} \left(t-i;p^{(i)}_0,p^{(i)}_1,p^{(i)}_2,p^{(i)}_3 \right), &\quad t \in (i,i+1], \quad i=1,\dots,L-1. \end{cases}
\end{equation*}
Note that it is $C^1$ by construction.
We also consider Bézier curves as splines with a single segment (the $L=1$ case). 

\subsection{The B\' ezierfold}
In the following, we present a natural Riemannian metric for the splines. To this end, we introduce the following.
\begin{definition}[Bézierfold of cubic splines]\label{def:bezierfold}
Let $U$ be a normal convex neighborhood of an $n$-dimensional Riemannian manifold $M$. We define the \textnormal{Bézierfold} $\mathcal{B}^L_3(U)$ of cubic splines by
\begin{align*}
\mathcal{B}^L_3(U) := \{B: [0,L]\to U\ \big|\ B \text{ is a cubic Bézier spline}\text{ with $L$ segments}\}.
\end{align*}
\end{definition}
Now, fix $L$ and $U$. To simplify notation, we write $\mathcal{B}$ for $\mathcal{B}^L_3(U)$. As remarked (but not proven) in~\cite{HanikHegevonTycowicz2022}, $\mathcal{B}$ can be given the structure of a smooth manifold when splines are identified with a suitable subset of their control points.  
In the following, we will rigorously prove this assertion, albeit using a different identification map. The advantage of the latter is that it immediately allows us to define our novel metric on $\mathcal{B}$.
\begin{theorem}
    The Bézierfold $\mathcal{B}$ can be given the structure of a smooth $(2L+2)n$-dimensional manifold.
\end{theorem}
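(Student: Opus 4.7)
The idea is to identify each spline $B \in \mathcal{B}$ with its values and velocities at the integer knots $0, 1, \dots, L$. Concretely, I would define
$$\Phi: \mathcal{B} \to (TM)^{L+1}, \qquad B \mapsto \big((B(0), \dot{B}(0)), \dots, (B(L), \dot{B}(L))\big),$$
where $\dot{B}(i)$ is unambiguous for $1 \le i \le L-1$ by the $C^1$ property of $B$. Since $(TM)^{L+1}$ is a smooth manifold of dimension $2n(L+1) = (2L+2)n$, if $\Phi$ is a bijection onto an open subset, then $\mathcal{B}$ inherits a smooth structure of the claimed dimension via $\Phi$. This identification map (different from the one using control points in \cite{HanikHegevonTycowicz2022}) has the additional virtue of exposing each spline directly as a tuple of elements of the tangent bundle, which will be crucial for the Sasaki-based metric introduced later.

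To construct an inverse and thereby establish bijectivity, I would proceed as follows. Given a tuple $((q_i, v_i))_{i=0}^L$, define the control points of the $i$-th segment ($0 \le i \le L-1$) by
$$p^{(i)}_0 := q_i, \qquad p^{(i)}_1 := \exp_{q_i}\!\big(\tfrac{1}{3} v_i\big), \qquad p^{(i)}_2 := \exp_{q_{i+1}}\!\big(-\tfrac{1}{3} v_{i+1}\big), \qquad p^{(i)}_3 := q_{i+1}.$$
Relations~\eqref{eq:point_from_vec}--\eqref{eq:vec_from_point} then yield $\dot{\beta}^{(i)}(0) = v_i$ and $\dot{\beta}^{(i)}(1) = v_{i+1}$. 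The $C^0$ condition $p^{(i)}_3 = p^{(i+1)}_0$ is immediate, and the $C^1$ condition~\eqref{eq:C1_condition} follows from
$$\gamma(2; p^{(i)}_2, p^{(i)}_3) = \exp_{q_{i+1}}\!\big({-}\log_{q_{i+1}} p^{(i)}_2\big) = \exp_{q_{i+1}}\!\big(\tfrac{1}{3} v_{i+1}\big) = p^{(i+1)}_1.$$
This reconstruction is manifestly the unique inverse of $\Phi$, as the segment values $B(i)$ and endpoint velocities determine all control points uniquely through~\eqref{eq:point_from_vec}.

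It remains to identify the image. Let $V \subset (TM)^{L+1}$ be the set of tuples $((q_i, v_i))$ such that $q_i \in U$ for every $i$ and the points $\exp_{q_i}(\pm \tfrac{1}{3} v_i)$ specified above all lie in $U$. Since $U$ is open and $\exp$ is continuous, $V$ is an open subset of $(TM)^{L+1}$ and therefore itself a smooth $(2L+2)n$-dimensional manifold. Normal convexity of $U$ guarantees that a spline assembled from control points in $U$ stays in $U$ (the de Casteljau recursion only forms geodesics between points of $U$), so the reconstruction indeed lies in $\mathcal{B}$; conversely, the control points of any $B \in \mathcal{B}$ automatically lie in $U$, so $\Phi(B) \in V$. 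Declaring $\Phi: \mathcal{B} \to V$ a diffeomorphism transfers the smooth structure, proving the theorem. The main obstacle I expect is the careful verification that the $C^1$ junction condition~\eqref{eq:C1_condition} is precisely equivalent to velocity continuity in the $\Phi$-parameters; once this is in hand, the dimension count and openness of $V$ are routine.
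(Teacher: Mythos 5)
Your proof is correct and follows essentially the same route as the paper: both identify a spline with the tuple of knot footpoints and (suitably scaled) end velocities, invert this map via (\ref{eq:point_from_vec}) and the $C^1$ condition (\ref{eq:C1_condition}), and pull back the smooth product structure of a power of the tangent bundle. The only differences are cosmetic --- the paper scales the velocities by $1/3$ so that the vector components are exactly $\log_{p_0^{(i)}}p_1^{(i)}$, and it takes the codomain to be all of $(TU)^{L+1}$ where you, somewhat more carefully, restrict to the open subset on which the reconstructed control points remain in $U$.
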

\begin{proof}
    We define the map
    \begin{align} \label{def:F}
    \begin{split}
        F: \mathcal{B} &\to (TU)^{L+1}, \\
        B &\mapsto \bigg( \big(B(0), \dot{B}(0) /3 \big), \dots, \big( B(L), \dot{B}(L) / 3 \big) \bigg).
        \end{split}
    \end{align}
    Equation (\ref{eq:vec_from_point}) yields
    \begin{align*}
        F(B) = \Bigg( \bigg(p^{(0)}_0, \log_{p^{(0)}_0}\big(p^{(0)}_1\big) \bigg), \dots, \bigg(p^{(L-1)}_0, \log_{p^{(L-1)}_0}&\big(p^{(L-1)}_1\big) \bigg), \\
        &\bigg(p^{(L-1)}_3, -\log_{p^{(L-1)}_3}\big(p^{(L-1)}_2\big) \bigg) \Bigg).
    \end{align*}
    Crucially, $F$ is bijective: It follows from (\ref{eq:point_from_vec}) and (\ref{eq:C1_condition}) that $F^{-1}$ is the map that assigns to each element 
    $$\big((p_0, u_0), (p_1, u_1) \dots, (p_L, u_L)\big) \in (TU)^{L+1}$$
    the Bézier spline with control points 
    \begin{align} \label{eq:F-1}
        \left( p^{(i)}_0,p^{(i)}_1,p^{(i)}_2,p^{(i)}_3 \right) &= \left( p_i, \exp_{p_i}(u_i), \exp_{p_{i+1}}(-u_{i+1}), p_{i+1} \right), \quad i=0,\dots,L-1.
    \end{align}
    The manifold structure is now obtained by pulling back \cite[Ch.\ 30 § 9]{Postnikov2013} the (product) structure of $(TU)^{L+1}$ along $F$. We thus obtain the induced topology. Furthermore, if $V\subseteq(TU)^{L+1}$ and $\phi:V \to \mathbb{R}^{(2L+2)n}$ form a chart $(V,\phi)$ of $(TU)^{L+1}$, then $(F^{-1}(V), \phi \circ F)$ is a chart of $\mathcal{B}$. The set of all charts that are constructed in this way constitutes the maximal (smooth) atlas of $\mathcal{B}$.
\end{proof}
Note that, with the above construction, $F$ is a diffeomorphism. In Fig.~\ref{fig:spline_encoding}, we visualize a Bézier spline with 2 segments on $S^2$ together with the 3 elements of $TS^2$ to which it is mapped by $F$.

\begin{figure}[t]
	\centering
        \includegraphics[width=0.5\textwidth]{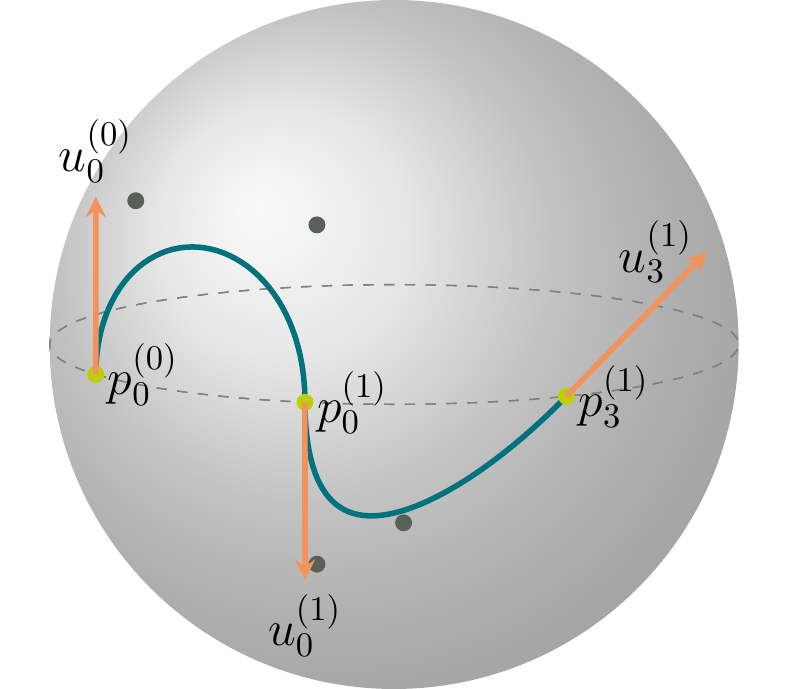}
	\caption{Cubic Bézier spline with two segments on the 2-dimensional sphere $S^2$. Footpoints are shown in green while vectors are in orange. Here, $u_0^{(i)}:=\textnormal{log}_{p_0^{(0)}}(p_1^{(0)})$, $u_0^{(1)}:=\textnormal{log}_{p_0^{(1)}}(p_1^{(1)})$, and $u_3^{(1)}:=-\textnormal{log}_{p_3^{(1)}}(p_2^{(1)})$. Control points that are not used in the representation are gray.}
	\label{fig:spline_encoding}
\end{figure}

\subsection{Sasaki Metric for the Bézierfold}
We can now use $F$ to pull the (product) Sasaki metric back to the B\'ezierfold. To this end, we first investigate how the $F^{-1}$ characterizes the tangent spaces of the Bézierfold. Therefore, we use the following notation. If $f: M \to N$ is a map between manifolds $M$ and $N$, we denote the derivative of $f$ at $p$ in direction $u \in T_pM$ by $\textnormal{d}_pf(u)$. When we differentiate the exponential $\exp_p(u)$ w.r.t.\ the footpoint at $p$, we write $(\textnormal{d}_p\exp_{(\cdot)}(u))(v)$.

Let $B \in \mathcal{B}$. Because of (\ref{eq:F-1}), there is $((p_0, u_0), (p_1, u_1) \dots, (p_L, u_L)) \in (TU)^{L+1}$ such that the control points of $B$ are given by 
$$\left( p_i, \exp_{p_i}(u_i), \exp_{p_{i+1}}(-u_{i+1}), p_{i+1} \right)_{i=0,\dots,L-1}.$$
Since derivatives of diffeomorphisms are everywhere isomorphisms between tangent spaces, we find that $T_B\mathcal{B}$ is the image of the tangent space $T_{((p_0, u_0),\dots, (p_L, u_L))}(TU)^{L+1}$ under $\textnormal{d}F^{-1}$. In the product structure, we can compute this image component-wise; therefore
\begin{align} \label{eq:tangent_space}
\begin{split}
T_B\mathcal{B} = \{ X: [0,L] \to TM\ \big|\  \exists \ &(v_0, w_0) \in T_{(p_0, u_0)}TM, \dots, \\ 
&(v_L, w_L) \in T_{(p_L, u_L)}TM: X = \sum_{i=0}^L \textnormal{d}_{(p_i,u_i)}F^{-1}\big((v_i,w_i)\big)\}.
\end{split}
\end{align}
Every element $X \in T_B\mathcal{B}$ is thus a vector field along $B$. We say that it is induced by the vectors $(v_0, w_0), \dots, (v_L, w_L) \in (TM)^2$.

We now use $B_j^{(i)}$ when we view only the $j$-th control point of the $i$-th segment of $B$ as a variable, while all others (as well as $t$) are fixed.
Definition (\ref{def:F}) then implies that, for each $i \in \{0,\dots, L\}$, the derivative 
of $F^{-1}$ at $(p_i,u_i)$ in direction $(v_i,w_i)$ is $$\textnormal{d}_{(p_i,u_i)}F^{-1}\big((v_i,w_i)\big) = \textnormal{d}_{(p_i,u_i)}B\big((v_i,w_i)\big).$$
For $i \in \{1,\dots, L-1\}$ the chain rule then yields
\begin{align} \label{eq:derivative_B_1}
\begin{split}
    \textnormal{d}_{(p_i,u_i)}B\big((v_i,w_i)\big) = & \ \textnormal{d}_{\exp_{p_{i}}(-u_i)}B^{(i-1)}_2 \big(\textnormal{d}_{-u_i} \exp_{p_i}(w_i) \big) \\
    &+ \textnormal{d}_{\exp_{p_{i}}(-u_i)}B^{(i-1)}_2 \big(\textnormal{d}_{p_i} \exp_{(\cdot)}(-u_i)(v_i) \big) \\ 
    &+ \textnormal{d}_{\exp_{p_{i}}(u_i)}B^{(i)}_1 \big(\textnormal{d}_{u_i}\exp_{p_i}(w_i) \big) \\
    &+ \textnormal{d}_{\exp_{p_{i}}(u_i)}B^{(i)}_1\big(\textnormal{d}_{p_i} \exp_{(\cdot)}(u_i)(v_i) \big) + \textnormal{d}_{p_i}B^{(i)}_0(v_i).
\end{split}
\end{align}
For $i=0$ we get
\begin{align} \label{eq:derivative_B_2}
\begin{split}
    \textnormal{d}_{(p_0,u_0)}B\big((v_0,w_0)\big) =& \ \textnormal{d}_{\exp_{p_0}(u_0)}B^{(0)}_1 \big(\textnormal{d}_{u_0}\exp_{p_0}(w_0) \big) \\
    &+ \textnormal{d}_{\exp_{p_{0}}(u_0)}B^{(0)}_1\big(\textnormal{d}_{p_0} \exp_{(\cdot)}(u_0)(v_0) \big) + \textnormal{d}_{p_0}B^{(0)}_0(v_0),
\end{split}
\end{align}
and for $i = L$
\begin{align} \label{eq:derivative_B_3}
\begin{split}
    \textnormal{d}_{(p_L,u_L)}B\big((v_L,w_L)\big) = & \ \textnormal{d}_{\exp_{p_{L}}(-u_L)}B^{(L-1)}_2 \big(\textnormal{d}_{-u_L} \exp_{p_L}(w_L) \big) \\
    &+ \textnormal{d}_{\exp_{p_{L}}(-u_L)}B^{(L-1)}_2 \big(\textnormal{d}_{p_L} \exp_{(\cdot)}(-u_L)(v_L) \big) + \textnormal{d}_{p_L}B^{(L-1)}_3(v_L).
\end{split}
\end{align}

Note that we only take derivatives of single Bézier curves (the $(i-1)$-th and $i$-th segment) w.r.t.\ their control points. They are given by ``concatenated'' Jacobi fields~\cite[Thm.\ 7]{BergmanGousenbourger2018}. Furthermore, the derivatives of the exponential map are also Jacobi fields~\cite[Sec.\ 3.1]{Fletcher2013}. If $M$ is a symmetric space, then (\ref{eq:derivative_B_1}), (\ref{eq:derivative_B_2}), (\ref{eq:derivative_B_3}), and thus elements of the tangent space (\ref{eq:tangent_space}) can be calculated explicitly~\cite[p.\ 121]{doCarmo1992}. 

We endow $T\mathcal{B}$ with the pullback $g^*$ of the product Sasaki metric under $F$.
To this end, let $X, Y \in T_B\mathcal{B}$ and $(v^X_0, w^X_0), \dots, (v^X_L, w^X_L) \in (TM)^2$ and $(v^Y_0, w^Y_0), \dots, (v^Y_L, w^Y_L) \in (TM)^2$ be the vectors that induce them.
Then, using (\ref{def:Sasaki_metric}), we obtain
\begin{align*}
g^*_B(X, Y) &:= \sum_{i=0}^L\widetilde{g}_{(p_i,u_i)}\big((v^X_i,w^X_i), (v^Y_i, w^Y_i)\big) \\
&= \sum_{i=0}^Lg_{p_i}(v^X_i,v^Y_i) + g_{p_i}(w^X_i, w^Y_i).
\end{align*}

Importantly, as $\mathcal{B}$ and $(TM)^{L+1}$ are now isometric (with isometry $F$), we can always use the latter for computations, only transforming the results to $\mathcal{B}$ (with $F^{-1}$) as the final step. In particular, for the calculations in the next section, it is never necessary to evaluate a vector field along a Bézier curve explicitly; one only needs the vectors that induce it. This is one of the major advantages of our construction.
\section{Application: Hurricane Tracks}
\label{sec:hur}
Tropical cyclones, also referred to as hurricanes or typhoons, belong to the most supreme natural phenomena with enormous impact on environment, economy, and human life. The most common indicator for the intensity of a hurricane is its maximum sustained wind (maxwind), which classifies the storm into categories via the Saffir–Simpson hurricane wind scale. For instance, maxwind $\geq 137$ knots corresponds to category 5. The maximal category over a track is called its category. Thus, the same applies to the maxwind. 

High variability of tracks and out-most complexity of hurricanes has led to a huge number of works to classify, rationalize and predict them. We refer to~\cite{REKABDARKOLAEE2019351} for a Bayesian function model,~\cite{Asif2018PHURIEHI} for intensity estimation via machine learning and the overview of recent progress in tropical cyclone intensity forecasting~\cite{RecentProgress}, and~\cite{Snaiki2020RevisitingHT}. We remark that many approaches are not intrinsic and use linear approximations. 
Notable exceptions are the works \citep{SriTrjHur, bauerHomog} that employ an intrinsic Riemannian approach based on the squared root velocity framework, albeit only as illustrative examples and without consideration of maxwind.

\subsection{Dataset}
We verify the effectiveness of the proposed framework by applying it to the 2010-2021 Atlantic hurricane tracks (total number 218) from the HURDAT 2 database provided by the U.S. National Oceanic and Atmospheric Administration publicly available on \url{https://www.nhc.noaa.gov/data/}. The data under consideration comprises measurements of latitude, longitude, and maxwind on a 6 hours base. Tracks are represented as discrete trajectories in $S^2$. The number of points constituting a track varies from 13 to 96 (on average 32). Fig.~\ref{fig:hurricanes} illustrates this data set with a histogram of maximum sustained winds and a visualization of the 2010 hurricane tracks.

\begin{figure}[tb]
	\centering
  \includegraphics[width=0.43\textwidth,trim=0 8 0 0,clip]{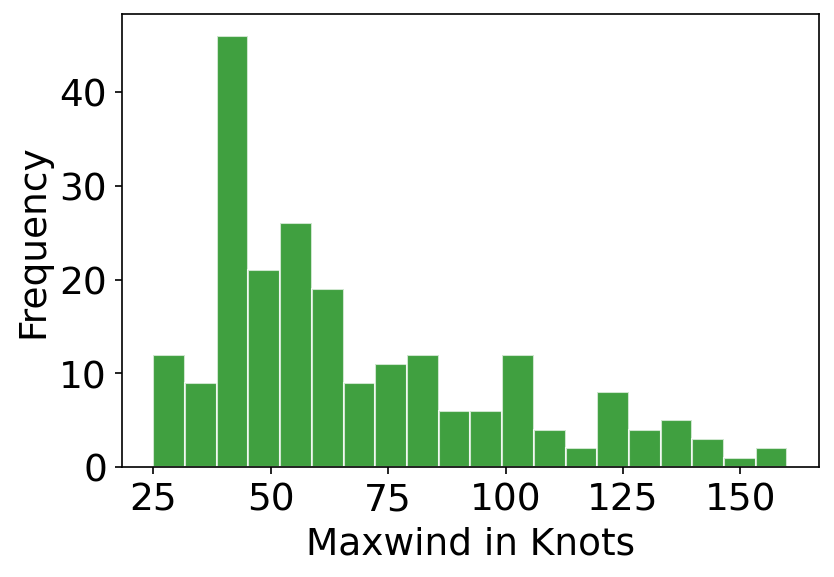}
  \includegraphics[width=0.5\textwidth,trim=0 0 0 50,clip]{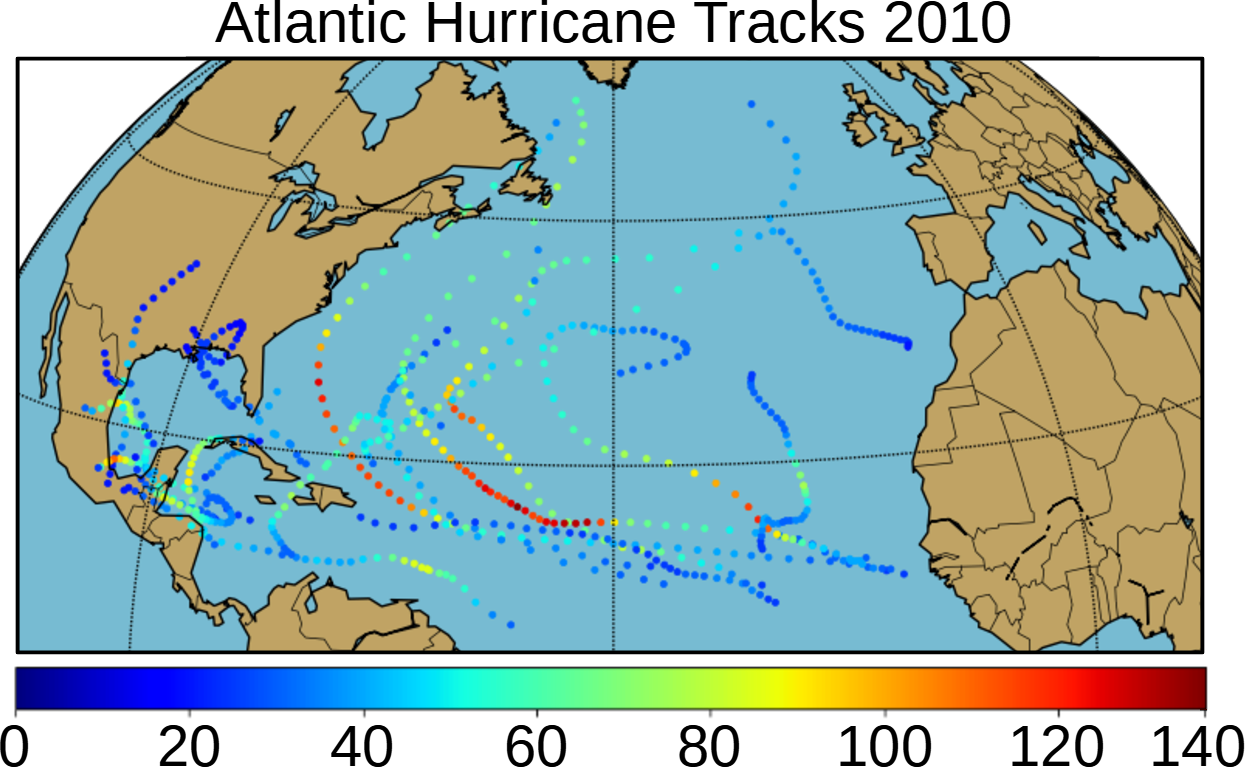}
	\caption{Maximum sustained wind (in knots): Color-coded for 2010 hurricane tracks (right) and histogram of maxima for all hurricanes (left).}
	\label{fig:hurricanes}
\end{figure}

\subsection{Spline Model}

Before turning towards group-level analysis, we first derive spline representations for the trajectories.
To this end, we consider the tracks to be observations of a manifold-valued random variable that depends on an explanatory scalar variable.
Then the relationship of the hurricane location on (elapsed) time is modeled as a cubic spline $B$.
The estimation of model parameters---the control points of the spline in our setting---is known as regression problem for which Riemannian generalizations are readily available.
In particular, we employ the least-squares-based approach presented in~\cite{Hanik_ea2020} that can be shown to provide maximum likelihood estimators for $S^2$-valued trajectories.

While we restrict our attention to cubic splines, we need to select the number of segments in order to obtain a fixed parametric model.
We can adopt a qualitative selection strategy based on the goodness of fit as determined by the \textit{coefficient of determination} denoted $R^2$.
Specifically, let $(q_1, t_1), \ldots, (q_N, t_N) \in U \times [0, L]$ be the observations of a track at corresponding (normalized) times, then the geometric $R^2$-value~\cite{Fletcher2013} is given by
\begin{equation}
    R^2 = 1 - \frac{\text{unexplained variance}}{\text{total variance}} = 1 - \frac{\frac{1}{N}\! \sum_i d(B(t_i), q_i)^2}{\min_{p \in U} \frac{1}{N}\! \sum_i d(p, q_i)^2},
\end{equation}
where the variances are determined by sums of squared geodesic distances $d$ of the data to the model $B$ respectively a single, best-fitting point in $U$. The latter is also referred to as Fr{\'e}chet mean.
As both variances are nonnegative, we have that $R^2\leq1$ with equality if and only if the model $B$ perfectly fits the data.

With the $R^2$ statistic at hand, we perform regression analysis for the hurricane tracks employing single- as well as two-segment splines.
Fig.~\ref{fig:regression} shows a histogram of $R^2$ values for the estimated splines together with visualizations for two exemplary chosen hurricanes.
While both spline representations expose a very high fidelity overall, the shift in distribution over $R^2$ values toward 1 confirms a significant improvement of the two-segment model over single-segment ones.
Given the upper bound of 1 for the $R^2$ value, there is minor room for improvement by more complex models with three or more segments, thus rendering them inadequate according to the principle of parsimony (also known as Ockham's razor).


\begin{figure}[t]
  \centering
  \includegraphics[width=0.43\textwidth]{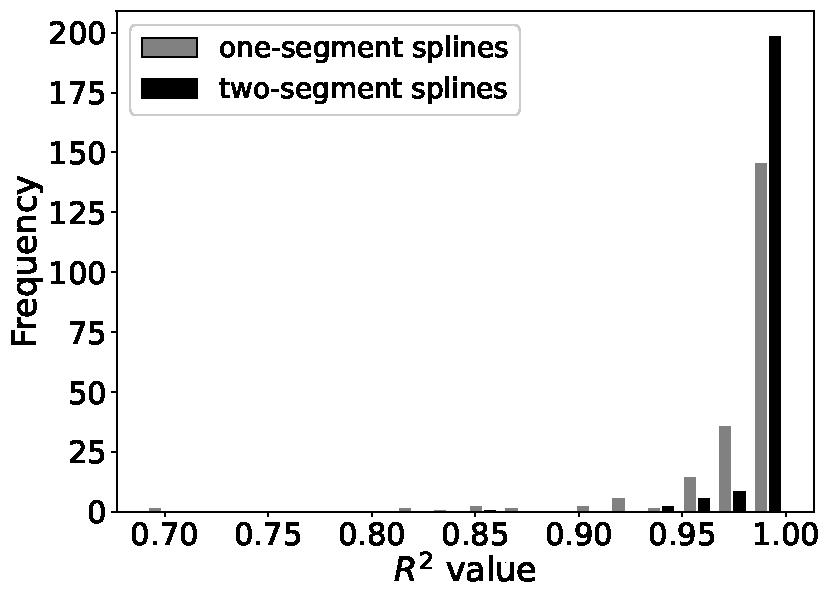} 
  {%
    \setlength{\fboxsep}{0pt}%
    \setlength{\fboxrule}{.5pt}%
    \raisebox{0.58cm}{\fbox{\includegraphics[width=0.52\textwidth]{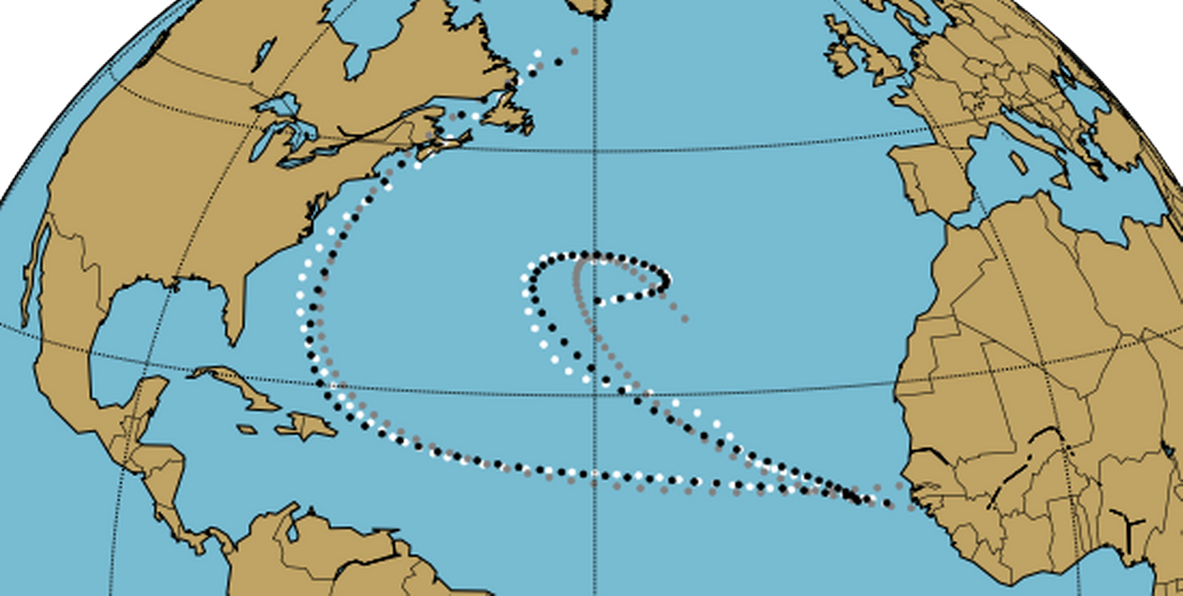}}}%
   }
  \caption{Left: Histogram of $R^2$ values for the regressed cubic Bézier curves. Right: Two exemplary hurricane tracks (white) together with regressed one- and two-segment curves (gray/black) with $R^2$ values of $0.995$/$0.998$ and $0.916$/$0.993$, respectively.}
  \label{fig:regression}
\end{figure}


\subsection{Group-level Analysis}


\begin{figure}[t]
    \centering
    {%
    \setlength{\fboxsep}{0pt}%
    \setlength{\fboxrule}{.5pt}%
    \fbox{\includegraphics[width=0.47\textwidth]{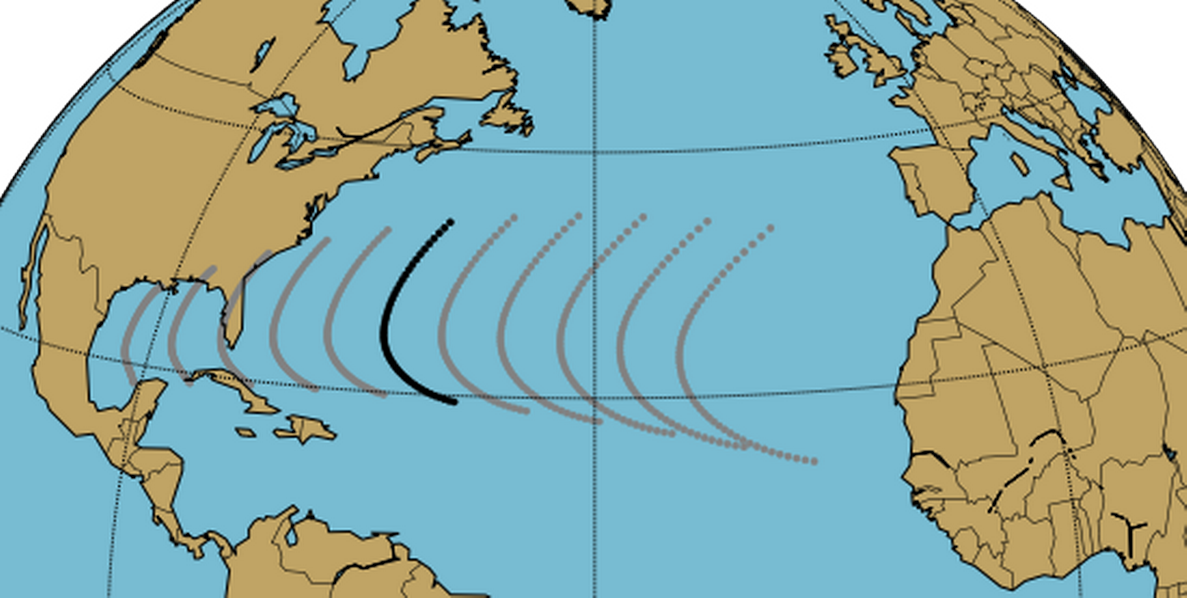}}%
    }%
    \hspace{1ex}
    {%
    \setlength{\fboxsep}{0.2pt}%
    \setlength{\fboxrule}{.5pt}%
    \fbox{\includegraphics[width=0.47\textwidth]{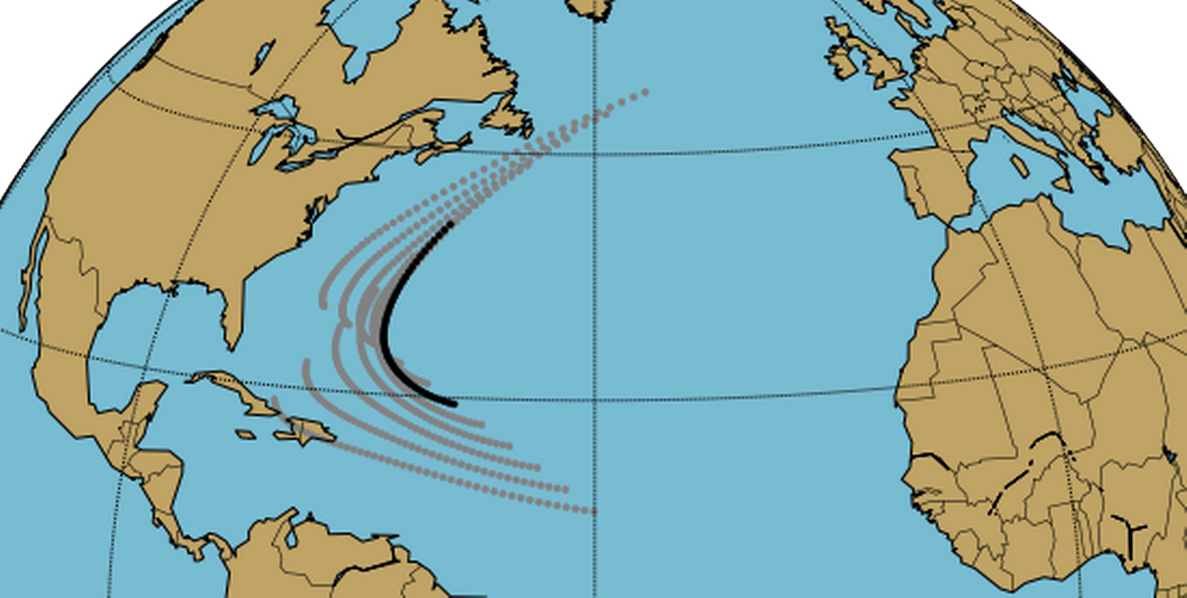}}%
    }%
    \caption{Two-segment spline Fr\'{e}chet mean (black) and first two dominant modes of variation (gray; left: first mode, right: second mode) for hurricane tracks.}
    \label{fig:pga_modes}
\end{figure}

\begin{figure}[b]
  \centering
  \includegraphics[width=.495\textwidth]{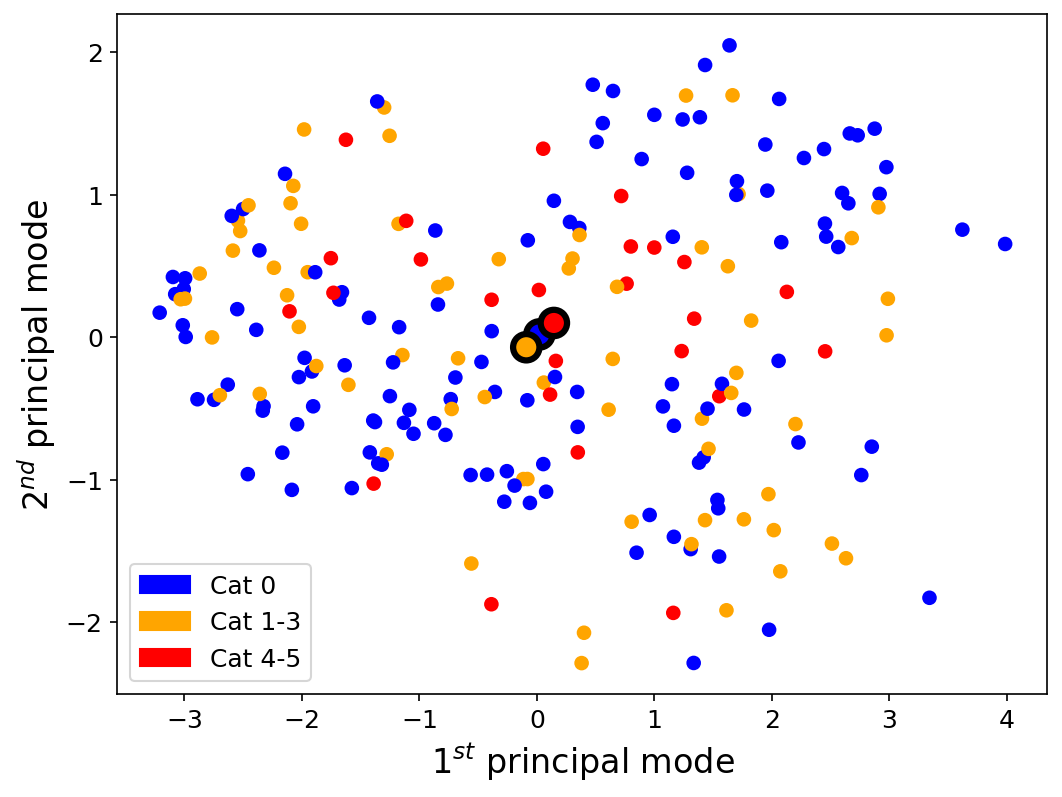}
  \includegraphics[width=.48\textwidth,trim=27 0 0 0,clip]{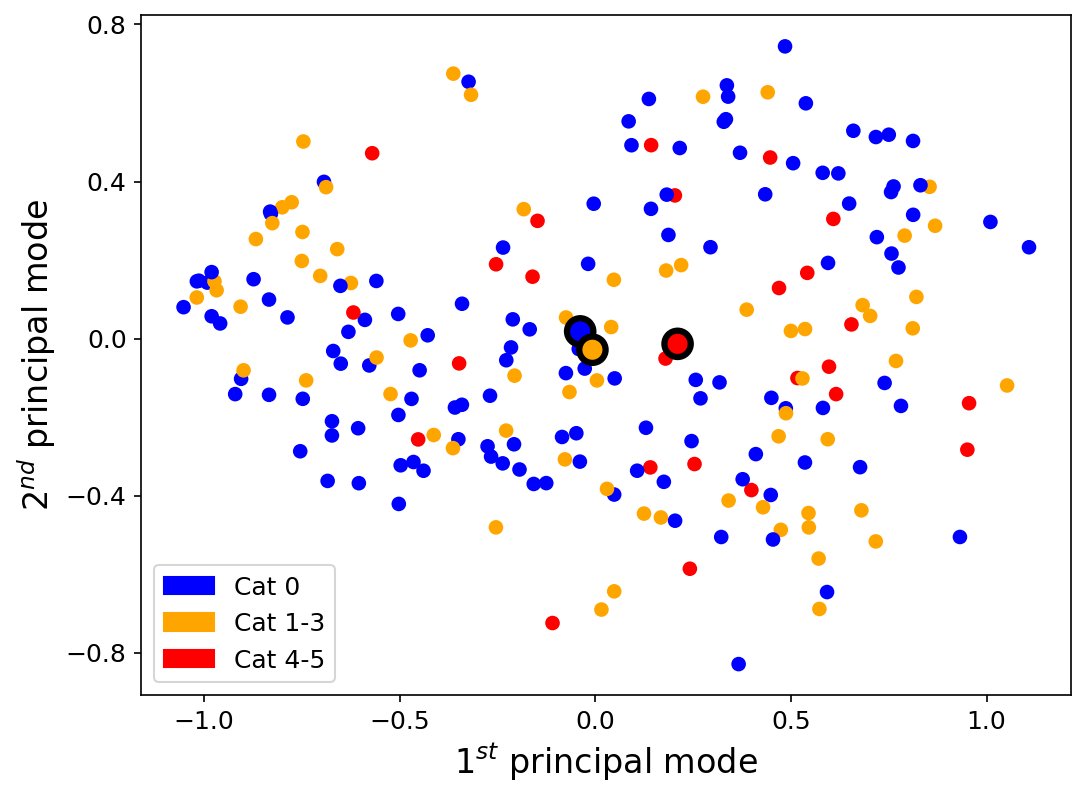}
  \vspace{-1em}
  \caption{PGA loading plot of the first two principal geodesic modes for proposed (right) and $L^2$ (left) metric. Group-wise means are highlighted with black boundaries.}
  \label{fig:scatter_plot}
\end{figure}

The presented geometric framework provides means to investigate hurricane tracks on a group level by studying their interrelations in terms of similarities and characteristic variations.
In particular, the proposed metric induces a notion of variance and co-variances allowing to perform mean-variance analysis.
To this end, we perform principal geodesic analysis (PGA) as proposed in~\cite{fletcher2004pga}.
The estimated Fr\'{e}chet mean together with the first and second dominant geodesic modes are visualized in Fig.~\ref{fig:pga_modes}. 

We can further encode hurricane tracks with respect to the hierarchical basis determined by PGA, i.e.\ the dominant geodesic modes.
By further omitting coordinates corresponding to the least dominant modes we obtain a low-dimensional representation that lends itself to visualization of the data.
Henceforth, let $\M$ denote the set of absolutely continuous functions $[0,\,1]\to M$. Physically motivated, we consider three groups: (i) tropical storms/depressions (category $<1$), (ii) hurricanes (category $1-3$) with some to devastating damage, and (iii) major hurricanes (category $>3$) with catastrophic damage. Fig.~\ref{fig:scatter_plot} shows a scatter plot based on a two-dimensional encoding with respect to the proposed representation in B\'{e}zierfold $\mathcal{B}$ in juxtaposition to treatment as elements of $\M$ equipped with the $L^2$ metric. In contrast to the latter, our representation shows an increased class separation particularly evident by the higher disparity between the group-wise means.

\subsection{Intensity Classification}

In the following, we investigate to which extent the (maximal) intensity of a hurricane can be inferred from its trajectory.
Note that, while the dependency of maximum sustained wind on physical parameters and its spatiotemporal correlation is highly complex, in this experiment we are primarily interested in the discrimination ability of features derived from our proposed framework in comparison to those obtained from state-of-the-art.
In particular, we will employ the hierarchical encoding provided by PGA (see also previous section) as features for classification.
Due to the limited amount of available hurricane tracks and the strong disbalance with respect to hurricane categories, we opt for a support vector machine (SVM) as classifier.
Thereby we condition the SVM model to differentiate between the three intensity classes described above. For all experiments we employ the implementation available in the \texttt{scikit-learn} v1.0 library using radial basis functions (kernel coefficient \texttt{gamma}=0.7) and regularization parameter \texttt{C}=3.
We further perform balanced training by adapting class weights through a factor inversely proportional to class frequencies.

As a baseline, we compare our approach to the common $L^2$ metric on $\M$. Thereby, to obtain a discrete counterpart $\M^h$ of $\M$ all hurricane tracks have been re-sampled to 32 points equidistantly spread (on the normalized time interval $[0,1]$).
Additionally, we compare to two state-of-the-art approaches. On the one hand, we employ the square root velocity (SRV) framework from~\cite{bauerHomog} that provides an elastic metric on $\M$.
On the other hand, we compare to the functionally-based metric for the intrinsic splines proposed in~\cite{HanikHegevonTycowicz2022}, i.e.\ the $L^2$ metric on $\mathcal{B}$.

\begin{figure}[b!]
    \centering
    \includegraphics[width=\textwidth]{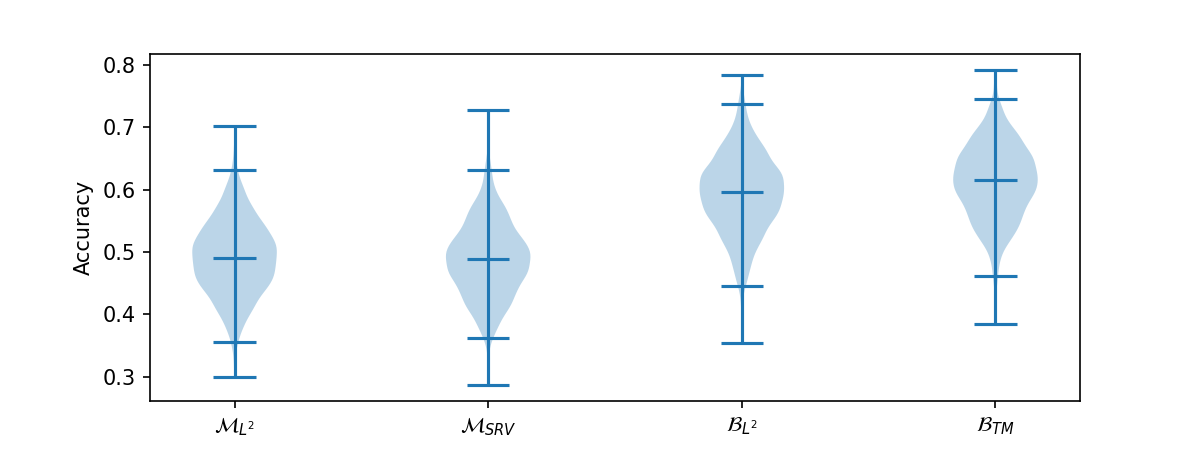}
    \vspace{-4ex}
    \caption{Distribution of classification accuracy for hurricanes treated as (from left to right) immersed curves in~$\M$ equipped the $L^2$ and SRV-based elastic metric as well as splines in $\B$ equipped with the $L^2$ and the proposed metric.}
    \label{fig:accuracy}
\end{figure}

For the evaluation of classification performance, we employ a balanced accuracy score given as the average of recall obtained in each class.
Due to the limited cardinality of the hurricane data set, the partition into training and test data can emphasize or dampen the extent of problems like overfitting and selection bias.
Therefore, we follow an extensive validation strategy in terms of 1000 random repetitions of 3-fold, stratified cross-validation.
This allows for estimating the distribution of classification accuracies, which are shown in Fig.~\ref{fig:accuracy} for all methods under consideration.
The results reveal significantly improved discrimination ability of spline-based representation with $61\%$ and $59\%$ accuracy for our proposed and the functional-based metric on average, respectively, as compared to $\M$-based ones with both $\approx 49\%$ accuracy.
These findings quantitatively confirm the superior performance for differentiation that was already qualitatively visible in the low-dimensional plots in Fig.~\ref{fig:scatter_plot}.
Note that the $dim(\M^h)=64$ is considerably larger than $dim(\mathcal{B})=8$ leading to SVM with more degrees of freedom in the former case.
Nevertheless, we would like to emphasize that the discrepancy in classification performance prevails also for dimensionality-reduced PGA encoding of curves in $\M$. This suggests that the increased discrimination performance can be attributed to the ability of regression schemes to suppress confounding factors such as variances in parameterization or noise.

\subsection{Computational Performance}

We conclude the experiments with an evaluation of the computational performance of algorithmic schemes for the analysis of manifold-valued splines.
In particular, we compare our approach with the functional-based metric~\cite{HanikHegevonTycowicz2022} for the B\'ezierfold.
Neither of these metrics provides explicit expressions for geodesics in $\mathcal{B}$.
We, therefore, resort to iterative optimization based on variational time-discrete geodesics~\cite{rumpf2015variational}.
For the functional-based metric, we use the implementation by~\citet{HanikHegevonTycowicz2022} available via the \texttt{morphomatics}~\cite{Morphomatics} v2.1.1 library.
Based on this library, we also implemented time-discrete Sasaki geodesics as described in~\cite{MuFl2012,NavaYazdani2022sasaki}.
For both metrics, we employ discrete 10-geodesics, i.e.\ intrinsic polygonal paths consisting of 10 segments.
Furthermore, the functional-based metric is evaluated using a 4-point quadrature scheme.

In terms of complexity the mean-variance analysis, namely~PGA, is the most costly part of our experiments as it requires iterative schemes with geodesic evaluations in each iteration. 
On a desktop computer (Intel i9-10920X CPU, NVIDIA GeForce RTX 3090 GPU), we obtain run times for PGA computation of 28s and 164s for the proposed and the functional-based metric, respectively.
This considerable gap in performance can be attributed to the fact that the functional-based metric requires additional discretization of integrals along the trajectories involving geodesic distance on the base manifold---$S^2$ in our experiments.
Furthermore, since explicit expressions for curvature estimation on $S^2$ (and indeed many other practically relevant manifolds) are known, the Sasaki metric can be approximated very efficiently.

\section{Conclusion and Future Work}

We presented a Riemannian framework for the comparison of manifold-valued trajectories. Therein, trajectories are represented by composite B\'ezier splines gained via regression. For the comparison of the trajectories, we proposed a natural extension of the Sasaki metric. This allows for estimating average trajectories representing group trends. We applied the proposed model to Atlantic hurricane tracks and presented experiments for an intensity classification of the tracks. Therein, we compared the proposed framework with those based on $L^2$ as well as elastic metric, in which the results indicate clear  advantages of our approach.

For future work, we intend to propose a generative model based on the introduced framework to enable online forecasting, employ statistical tests, and incorporate the main parameter determining the intensity of hurricanes, namely\ the Coriolis force and thermal effects, to improve the modeling. Moreover, we plan to apply our approach to further manifold-valued data and employ a geometry-aware recurrent neural network for classification.

\section{Acknowledgments}
We are grateful for the HURDAT 2 database provided by the U.S. National Oceanic and Atmospheric Administration.
This work was supported through the German Research Foundation (DFG) via individual funding (project ID 499571814) as well as under Germany´s Excellence Strategy – MATH+ : The Berlin Mathematics Research Center (EXC-2046/1 – project ID: 390685689), by the Bundesministerium f\" ur Bildung und Forschung (BMBF) through BIFOLD -- The Berlin Institute for the Foundations of Learning and Data (ref. 01IS18025A and ref. 01IS18037A), and by the Bundesministerium f\"ur Wirtschaft und Klimaschutz through DAKI-FWS (01MK21009J).

\bibliography{main}

\end{document}